\documentclass[12pt,reqno]{amsart}
\usepackage{amsmath}
\usepackage{amssymb}
\usepackage[left=3cm,top=3cm,right=3cm,bottom=3cm]{geometry}
\usepackage{epsfig}

\newcommand{\remove}[1]{}

\begin{document}
\newtheorem{theorem}{Theorem}[section]
\newtheorem{lemma}[theorem]{Lemma}
\newtheorem{definition}[theorem]{Definition}
\newtheorem{conjecture}[theorem]{Conjecture}
\newtheorem{proposition}[theorem]{Proposition}
\newtheorem{algorithm}[theorem]{Algorithm}
\newtheorem{corollary}[theorem]{Corollary}
\newtheorem{observation}[theorem]{Observation}
\newtheorem{problem}[theorem]{Open Problem}
\newcommand{\noin}{\noindent}
\newcommand{\ind}{\indent}
\newcommand{\al}{\alpha}
\newcommand{\om}{\omega}
\newcommand{\pp}{\mathcal P}
\newcommand{\ppp}{\mathfrak P}
\newcommand{\R}{{\mathbb R}}
\newcommand{\N}{{\mathbb N}}
\newcommand{\Z}{{\mathbb Z}}
\newcommand\eps{\varepsilon}
\newcommand{\E}{\mathbb E}
\newcommand{\Prob}{\mathbb{P}}
\newcommand{\pl}{\textrm{C}}
\newcommand{\dang}{\textrm{dang}}
\renewcommand{\labelenumi}{(\roman{enumi})}
\newcommand{\bc}{\bar c}
\newcommand{\cal}[1]{\mathcal{#1}}
\newcommand{\G}{{\cal G}}
\newcommand{\T}{{\cal T}}
\renewcommand{\P}{{\cal P}}

\newcommand{\bel}[1]{\be\lab{#1}}
\newcommand{\ee}{\end{equation}}
\newcommand{\be}{\begin{equation}}
 \newcommand\eqn[1]{(\ref{#1})}

\title{Chasing robbers on random 
geometric graphs---an alternative approach}

\author{Noga Alon}
\address{Sackler School of Mathematics and Blavatnik School of 
Computer Science, Tel Aviv University, Tel Aviv 69978, 
Israel and Institute for Advanced Study, Princeton, New Jersey, 08540, USA}
\email{\tt nogaa@tau.ac.il}

\author{Pawe\l{} Pra\l{}at}
\address{Department of Mathematics, Ryerson University, Toronto, ON, Canada, M5B 2K3}
\email{\texttt{pralat@ryerson.ca}}

\thanks{
The first author acknowledges support by an ERC Advanced grant, 
by a USA-Israeli
BSF grant, and by the Hermann Minkowski Minerva Center for
Geometry at
Tel Aviv University.
The second author acknowledges support from NSERC 
and Ryerson University. 
}

\keywords{random graphs, vertex-pursuit games, Cops and Robbers}

\maketitle

\begin{abstract}
We study the vertex pursuit game of \emph{Cops and
Robbers}, in which cops try to capture a robber on the vertices of the
graph. The minimum number of cops required to win on a given graph $G$ is
called the cop number of $G$. 
We focus on $\G_{d}(n,r)$, a random geometric
graph in which $n$ vertices are chosen uniformly at
random and independently from $[0,1]^d$, and two vertices are adjacent
if the Euclidean distance between them is at
most $r$. The main result is that if $r^{3d-1}>c_d \frac{\log
n}{n}$ then the cop number is $1$ with probability that tends to
$1$ as $n$ tends to infinity. The case $d=2$ was proved
earlier and independently in \cite{bdfm}, using a different
approach. Our method provides a tight $O(1/r^2)$ upper bound
for the number of rounds needed to catch the robber.
\end{abstract}

\section{Introduction\label{intro}}

The game of \emph{Cops and Robbers}, introduced independently by
Nowa\-kowski  and Winkler~\cite{nw} and Quilliot~\cite{q} almost thirty
years ago, is played on a fixed graph $G$. We will always assume that $G$
is undirected, simple, and finite.  There are two players, a set of $k$
\emph{cops}, where $k\ge 1$ is a fixed integer, and the \emph{robber}.
The cops begin the game by occupying any set of $k$ vertices (in
fact, for a connected $G$, their initial position  does not matter).
The robber then chooses a vertex, and the cops and robber move in
alternate rounds. The players use edges to move from vertex to vertex.
More than one cop is allowed to occupy a vertex, and the players may
remain on their current positions. The players know each others current
locations.  The cops win and the game ends if at least one of the cops
eventually occupies the same vertex as the robber;  otherwise, that is,
if the robber can avoid this indefinitely, he wins. As placing a cop on
each vertex guarantees that the cops win, we may define the \emph{cop
number}, written $c(G)$, which is the minimum number of cops needed to
win on $G$. The cop number was introduced by Aigner and Fromme~\cite{af}
who proved (among other things) that if $G$ is planar, then $c(G)\leq 3$.
For more results on vertex pursuit games such as \emph{Cops and Robbers},
the reader is directed to the  surveys on the subject~\cite{al,ft,h}
and the monograph~\cite{bn}.
 The most important  open problem in this area is Meyniel's conjecture
(communicated by Frankl~\cite{f}). It states that $c(n) = O(\sqrt{n})$,
where $c(n)$ is the maximum of $c(G)$ over all $n$-vertex connected
graphs.  If true, the estimate is best possible as one can construct a
graph based on the finite projective plane with the cop number of order at
least  $\Omega(\sqrt{n})$.  Up until recently, the best known upper bound
of $O(n \log \log n / \log n)$ was given in~\cite{f}. This was improved
to $c(n) = O(n/\log n)$ in~\cite{eshan}. Today we know that the cop number
is at most $n 2^{-(1+o(1))\sqrt{\log_2 n}}$ (which is still $n^{1-o(1)}$)
for any connected graph on $n$ vertices (a result obtained independently
by Lu and Peng~\cite{lp} and Scott and Sudakov~\cite{ss}, see also
~\cite{am, fkl}
for some extensions).  If one looks for counterexamples
for Meyniel's conjecture it is natural to study first the cop number
of random graphs. Recent years have witnessed significant interest in
the study of random graphs from that perspective~\cite{bkl, bpw, lp2,
p} confirming that, in fact, Meyniel's conjecture holds asymptotically
almost surely for binomial random graphs~\cite{pw} as well as for random
$d$-regular graphs~\cite{pw2}.

\bigskip

In this note we consider a \emph{random geometric graph} $\G_d(n,r)$
which is defined as a random graph with vertex set $[n]=\{1,2,\dots, n\}$
in which $n$ vertices are chosen uniformly at random and independently
from $[0,1]^d$, and a pair of vertices within Euclidean distance $r$
appears as an edge---see, for example, the monograph~\cite{pen}.

As typical in
random graph theory, we shall consider only asymptotic properties of
$\G_d(n,r)$ as $n\rightarrow \infty$, where $r=r(n)$
may and usually does depend on $n$. 
We say that an event in a probability space
holds \emph{asymptotically almost surely} (\emph{a.a.s.}) if its
probability tends to one  as $n$ goes to infinity.

\section{The result and its proof}

We prove the following result. 

\begin{theorem}\label{t11}
There exists an absolute constant $c_2>0$ so that 
if $r^5 > c_2 \frac {\log n}{n}$ then a.a.s.\ $c(\G_2(n,r))=1$.
\end{theorem}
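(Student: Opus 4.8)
The plan is to exhibit an explicit winning strategy for a single cop rather than to verify dismantlability directly: although the characterization of cop-win graphs by Nowakowski and Winkler~\cite{nw} guarantees that it would suffice to show $\G_2(n,r)$ is dismantlable a.a.s., only an explicit strategy will yield the quantitative $O(1/r^2)$ bound on the number of rounds advertised in the abstract. Throughout I condition on a high-probability event describing the geometry of the point set. First I tessellate $[0,1]^2$ into axis-parallel cells of side length a small constant multiple of $r$, chosen so that any two points lying in the same or in adjacent cells are within Euclidean distance $r$ and hence adjacent. The expected number of points in a cell is of order $nr^2$, and since $r^5 > c_2 \log n / n$ forces $n r^2$ to grow much faster than $\log n$, a Chernoff bound together with a union bound over the $O(1/r^2)$ cells shows that a.a.s.\ every cell is nonempty; in fact each cell contains $\Theta(nr^2)$ points. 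I will also need the finer fact that thin sub-cell regions are populated in a controlled way, and this is the only place where the full strength of the hypothesis is used.

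The strategy itself adapts the isometric-path guarding idea of Aigner and Fromme~\cite{af} to the geometric setting. The cop first moves, in $O(1/r)$ rounds, to a vertex from which it \emph{guards} a vertical line $x=a$: it shadows the robber by keeping its own $y$-coordinate equal, up to a small error, to the robber's, while remaining in the strip of width $\Theta(r)$ around $x=a$. Because both players move a Euclidean distance at most $r$ per round, the cop can match the robber's vertical displacement, so the robber can never cross the guarded line without being caught. The cop then \emph{sweeps}: it advances the guarded line to the right by $\Theta(r)$, re-establishes the shadowing over the full height of the square in $O(1/r)$ rounds, and repeats. After $O(1/r)$ sweeps the robber is confined to the rightmost strip $x \in [1-r,1]$, for a running total of $O(1/r)\cdot O(1/r)=O(1/r^2)$ rounds. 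Within this strip any two vertices with nearby $y$-coordinates are adjacent, so the induced graph behaves like a path, and a further $O(1/r)$ rounds of shadowing in the $y$-direction trap and capture the robber, keeping the overall bound at $O(1/r^2)$.

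The main obstacle, and the technical heart of the argument, is to realise the shadowing as a legal sequence of graph moves. Concretely, whenever the cop occupies a vertex of the current strip and the robber changes its height, I must find a \emph{neighbour} of the cop inside the strip whose $y$-coordinate matches the robber's new height closely enough that the tracking error does not accumulate over the $\Theta(1/r)$ moves making up a single sweep. Existence of such a neighbour amounts to the occupancy of certain regions much thinner than a full cell, so that the probability of one being empty is as small as $\exp(-\Theta(n r^{5}))$; the union bound over all strips and all target heights then matches the hypothesis $r^5 > c_2 \log n / n$. Pinning down exactly how thin these regions must be, and controlling the accumulated tracking error so that it stays below the $\Theta(r)$ margin available for interception, is the step I expect to demand the most care. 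Granting this fine-scale occupancy estimate, it is then routine to verify that the robber can never breach a guarded line and that the cop strictly decreases its gap to the robber during the final capture phase.
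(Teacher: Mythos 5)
Your approach is genuinely different from the paper's: the paper has the cop pursue radially from the center $O$ along the segment $OR$, with the potential function ``squared distance of the cop from $O$'' increasing by at least $r^2/5$ per round, first in the continuous game on $[0,1]^2$ and then transferred to $\G_2(n,r)$ by showing that every thin isosceles triangle of height $r^2/100$ and base $r^3/10^5$ (area $\Theta(r^5)$, whence the exponent $5$) a.a.s.\ contains a vertex. Your sweeping/guarding scheme in the style of Aigner and Fromme~\cite{af} is a reasonable thing to try, but as written it has two genuine gaps, and you have deferred exactly the point on which the whole strategy stands or falls.

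First, the shadowing invariant is budget-critical. The robber can change its $y$-coordinate by up to $r$ per move, and the cop's entire move budget is also $r$; so if the cop must simultaneously (a) match the robber's vertical displacement, (b) remain inside a strip of width $\Theta(r)$, and (c) land on an actual vertex of the random point set, there is \emph{no slack at all} in the worst case, and the tracking error has no mechanism to contract --- it can only accumulate. Worse, the precision with which the cop must match the robber's height in order to intercept a crossing degrades as the robber approaches the guarded line: if the robber sits at horizontal distance $\delta$ from the line, interception requires a $y$-error of order $\delta^2/r$, which the robber can drive below any fixed discretization scale by hovering near the line while oscillating vertically. This is precisely the phenomenon that makes the continuous Lion-and-Man mirroring strategy fail, and declaring it ``the step I expect to demand the most care'' does not discharge it; without a potential-function argument showing the error is self-correcting, the claim that ``the robber can never cross the guarded line'' is unproven. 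Second, your round count is inconsistent with the mechanics of a single cop: if re-establishing the guard on the advanced line genuinely takes $O(1/r)$ rounds, then during those rounds no line is guarded and the robber simply crosses back, so confinement is never achieved; if instead the cop advances while maintaining the shadow, you must explain why a robber hovering at the current line ever permits the cop a free move to advance. The paper's radial strategy avoids both issues because progress is measured by a monotone potential rather than by territory held, and the only probabilistic input needed is the one occupancy lemma for thin triangles. I would recommend either supplying a full potential/error analysis for the shadowing (including the near-line regime) or switching to a strategy in which the cop's progress is monotone by construction.
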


The same result was obtained earlier and
independently  in~\cite{bdfm} but the proof
presented here is quite different, provides a tight $O(1/r^2)$ 
bound for the number  of rounds required to catch the robber, and can be generalized to higher dimensions.
In the proof we describe
a strategy for the cop that is a winning one a.a.s. In~\cite{bdfm},
the known necessary and sufficient condition for a graph to be cop-win
(see~\cite{nw} for more details) is used; that is, it is shown that the
random geometric graph is what is called dismantlable a.a.s.
\bigskip

Essentially the same proof we provide here gives the following.
\begin{theorem}\label{t12}
For each fixed $d>1$ there exists a constant $c_d>0$ so that if $r^{3d-1} > c_d \frac {\log n}{n}$ then a.a.s.\ $c(\G_d(n,r))=1$.
\end{theorem}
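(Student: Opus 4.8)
The plan is to prove the two-dimensional case (Theorem~\ref{t11}) first, since Theorem~\ref{t12} follows by essentially the same argument with the geometry adapted to $d$ dimensions. Let me think about what a winning cop strategy looks like on a random geometric graph.

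The key geometric intuition: in $\mathcal{G}_2(n,r)$, a vertex at position $x$ is adjacent to all vertices within Euclidean distance $r$. So in one move the cop can travel up to distance $r$ in graph-terms, but the actual Euclidean displacement achievable depends on whether there are vertices to "step on." The condition $r^5 > c_2 \frac{\log n}{n}$ is the density threshold. Let me unpack it: the expected number of points in a ball of radius $r$ is roughly $n \cdot r^2$ (times a constant). The condition says $n r^5 > c_2 \log n$, i.e. $n r^2 > c_2 (\log n)/r^3$. Hmm, so the number of points per ball is at least of order $(\log n)/r^3$. Since $r \to 0$, this is much larger than $\log n$. So every $r$-ball is densely populated. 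Actually the critical scale here is a finer subdivision: if we tile $[0,1]^2$ by cells of side length $\epsilon r$ for small constant $\epsilon$, each cell has expected $n \epsilon^2 r^2$ points, and we'd want this to exceed $C \log n$ so that a.a.s. every cell is nonempty. That requires $n r^2 > C \log n / \epsilon^2$, which is much weaker than what's given. So the condition $r^5 > c_2 (\log n)/n$ must be buying something stronger—likely that the cop can always make geometric progress of a guaranteed fixed fraction of $r$ toward the robber at every step, via points that are well-placed.

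Here is the strategy I'd set up. First I'd establish the structural a.a.s. fact: with the given density, we can tile the square into cells of side $\delta r$ (for an appropriate small constant $\delta$) such that a.a.s. \emph{every} cell contains at least one vertex, and moreover the graph restricted to these "representative" vertices is such that adjacent cells have adjacent representatives. This makes the geometric graph "grid-like" at scale $r$. The extra power of $r$ in the exponent (the $5$ instead of $4$) should come from needing this to hold at a finer scale or with a stronger uniformity guarantee—perhaps that every point has a neighbor strictly closer to any target, so the cop never gets stuck. I would prove this by a union bound over the $O(1/(\delta r)^2)$ cells (or over a net of directions/positions), using a Chernoff bound on the Poisson-like count in each cell; the $\log n$ in the threshold is exactly what defeats the union bound.

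With the structural lemma in hand, the cop strategy is a pursuit/potential argument. Define the cop's target to be the robber's current vertex, and show that each round the cop can decrease the Euclidean distance to the robber by at least a fixed fraction of $r$ (say $\Omega(r)$), \emph{unless} already within distance $O(r)$, at which point one more move captures. Since the diameter of $[0,1]^2$ is $\sqrt{2} = O(1)$, the number of rounds is $O(1/r)$... but the claimed bound is $O(1/r^2)$, so the per-round progress must be $\Omega(r^2)$ in the relevant metric, or the cop works in a reduced region. The natural reconciliation: the cop first navigates to the robber's cell-region losing a factor, then the endgame is played at the vertex scale. I would formalize progress via a potential function equal to the graph distance (or scaled Euclidean distance) from cop to robber, show it is a supermartingale-free \emph{deterministic} decrease once the good event holds, and conclude capture in $O(1/r^2)$ rounds. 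The cop moves toward the robber greedily along the grid of representatives; the robber, even moving optimally, cannot increase the gap because the cop can always match a step in any direction (this is where the uniform "neighbor in every direction" guarantee is essential).

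The main obstacle I anticipate is the \emph{boundary and direction-uniformity} issue: greedy Euclidean pursuit can fail if at some vertex the cop has no neighbor in the precise direction of the robber (the robber could exploit a "gap" to sidestep). The whole point of the stronger density $r^5 > c_2(\log n)/n$ (rather than the mere connectivity threshold $r^2 \approx (\log n)/n$) is to guarantee, a.a.s. and uniformly over all vertices and all target directions, that a suitably-placed forward neighbor exists—so that greedy pursuit strictly reduces distance everywhere, including near the boundary of $[0,1]^2$. Nailing this uniform geometric guarantee (covering all $\Theta(n^2)$ vertex-pairs or a sufficiently fine net thereof, with the union bound still succeeding) is the technical heart. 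For general $d$ in Theorem~\ref{t12}, the exponent $3d-1$ arises because the "forward cone" in which a useful neighbor must be found shrinks with dimension, and covering it with the needed uniformity costs the extra factor of $r^{d-1}$ beyond the connectivity scale $r^d$; I would redo the net/union-bound count in the $d$-dimensional cone and verify $r^{3d-1} > c_d(\log n)/n$ is precisely what makes it go through.
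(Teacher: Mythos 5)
There is a genuine gap, and it sits at the heart of the argument: the cop's strategy. You propose greedy pursuit --- decrease the Euclidean (or graph) distance from cop to robber by a fixed amount each round, ``unless already within distance $O(r)$, at which point one more move captures.'' This fails. Since the robber moves first in each round, a robber at distance $D>0$ from the cop can always step distance $r$ perpendicular to the cop--robber segment, landing at distance $\sqrt{D^2+r^2}>r$ from the cop; so capture never occurs no matter how small $D$ is, and the distance after the cop's greedy response is $\sqrt{D^2+r^2}-r>0$. This is exactly the classical Lion-and-Man escape (and is the robber strategy the paper uses for its $\Omega(1/r^2)$ lower bound), and it is why your own arithmetic produced the inconsistent bound $O(1/r)$: the quantity ``distance to the robber'' decreases but never crosses the capture threshold. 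The paper's cop instead uses the potential $|OC|^2$, where $O$ is the center of the cube: he always stations himself \emph{on the segment} $OR$, strictly between $O$ and the robber and at distance at least $r^2/100$ from $R$, and an elementary similar-triangles argument shows that every non-capturing robber move lets him return to the new segment $OR'$ while increasing $|OC|^2$ by at least $r^2/5$. Since $|OC|^2\le 1/2$, the robber is cornered against the boundary within $O(1/r^2)$ rounds. No version of ``decrease the distance to the robber'' can substitute for this cornering idea.

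The second gap is the structural lemma. A tiling into cells of side $\delta r$, each nonempty a.a.s., only needs $nr^2\gg\log n$ and, as you yourself noticed, cannot be what the hypothesis $r^5>c_2\log n/n$ is for; but your replacement guess (``a forward neighbor in every direction'') is not the right object either. What the discretization actually requires is that for every relevant point $X$ a thin isosceles triangle $T(X)$ with apex $X$, height $r^2/100$ pointing toward $O$, and base $r^3/10^5$ contains a vertex: the height scale $r^2$ matches the $r^2/100$ safety margin of the continuous strategy, and the base scale $r^3$ keeps the angular error between the lines $R'x$ and $R'O$ below $r/10^3$, which is what lets the continuous computation survive discretization. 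These triangles have area $\Theta(r^5)$, and a union bound over a fixed family of $O(r^{-6})$ rectangles inscribed in them is exactly where $r^5>c\log n/n$ enters. In dimension $d$ the triangle becomes a cone of height $r^2/100$ and base radius $r^3/10^5$, of volume $\Theta\bigl(r^2\cdot r^{3(d-1)}\bigr)=\Theta(r^{3d-1})$, which is the exponent you were trying to reverse-engineer. Without the cornering strategy and the correct family of thin regions, the proposal does not assemble into a proof.
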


In all dimensions the proof gives that a.a.s.\ the cop can win in
$O(1/r^2)$ steps and, as we mention below, this is tight; namely, a.a.s.\
the robber can ensure not to be caught in less steps. Therefore, the
capture time for this range of parameters is $\Theta(1/r^2)$ a.a.s. We
make no attempt to optimize the absolute constants in all arguments below,
aiming to propose an argument which is as simple as possible.

\bigskip

In order to prove Theorem~\ref{t11} it is convenient to describe first a
cleaner proof of the corresponding result for the continuous (infinite)
graph $\G_2(r)$ whose vertices are all of the points of $[0,1]^2$,
where two of them are adjacent if and only if their distance is at most
$r$. This is a natural variant of the well-known problem of \emph{the
Lion and the Christian} in which (perhaps surprisingly) the Christian
(counterpart of the robber in our game) has a winning strategy; see,
for example,~\cite{bela_coffee} for more details. In our game, the
cop (counterpart of the lion) has a winning strategy. This is a
essentially a known result 
\cite{from_Alan33, from_alan20}, but the proof
described here differs from the known ones and, crucially for
us, can be easily modified to yield a proof of Theorem~\ref{t11}
and Theorem \ref{t12}.

\begin{theorem}\label{t13}
$c(\G_2(r))=1$ for any $r>0$.
\end{theorem}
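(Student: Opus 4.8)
The plan is to reduce the two-dimensional pursuit to a pair of one-dimensional pursuits, exploiting that the robber is confined to the bounded square. Write the cop's position as $(c_1,c_2)$ and the robber's as $(b_1,b_2)$, and recall that in one round each player may change its position by a vector of Euclidean length at most $r$. In the first phase I would let the cop equalize the horizontal coordinate, i.e.\ drive $|c_1-b_1|$ to $0$. Projected onto the first axis this is a pursuit on the segment $[0,1]$ in which both tokens move by at most $r$: the cop always steps towards the robber's projection (the full distance $r$ unless the gap is already smaller), so the horizontal gap never increases, and it can fail to decrease only while the robber flees at full horizontal speed. Since the robber is trapped in $[0,1]$ it is pinned against an endpoint after $O(1/r)$ such steps, after which the gap shrinks by $r$ per round; hence the cop achieves $c_1=b_1$ in $O(1/r)$ rounds.

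In the second phase the cop tries to keep the horizontal coordinates (almost) equal while shrinking the vertical gap $|c_2-b_2|$. The basic observation is a budget split: if the robber's move has horizontal component $\delta$, the cop can spend $\delta$ of its budget to restore $c_1=b_1$ and is left with $\sqrt{r^2-\delta^2}$ for the vertical direction, which is at least the robber's own vertical displacement (itself at most $\sqrt{r^2-\delta^2}$). Consequently the cop can always prevent the vertical gap from growing, and it strictly shrinks that gap in any round in which the robber does not flee vertically at the full available speed $\sqrt{r^2-\delta^2}$. The only way for the robber to freeze the vertical gap is therefore to pour essentially all of its budget into horizontal motion.

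The main obstacle is exactly this horizontal evasion, and it is what makes the capture time $\Theta(1/r^2)$ rather than $\Theta(1/r)$: a robber sitting on the cop's vertical line and sliding sideways at speed $\approx r$ forces the cop to spend its whole budget re-aligning and so blocks all vertical progress. Here I would use the confinement to $[0,1]$ a second time. A robber that slides at near-full horizontal speed must sweep back and forth across the square, so its motion is predictable and bounded, and the cop breaks the stand-off by \emph{banking} vertical progress against these sweeps: it descends by a small amount $\beta$ while the robber runs away horizontally, paying only a horizontal lag of about $\beta^2/(2r)$, and it recovers that lag for free while the robber is running back towards the cop's column after reversing at a wall. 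A careful accounting should show a net gain of $\Omega(r^2)$ in the vertical direction per round, so the gap closes in $O(1/r^2)$ rounds, which is tight since the sliding robber also forces $\Omega(1/r^2)$ rounds. The delicate step, which I expect to require the most work, is to establish this net $\Omega(r^2)$ gain against an arbitrary, possibly adaptive and mixed, robber policy, rather than only against the two extreme policies of pure vertical flight and pure horizontal sliding. The argument for $\G_2(r)$ then transfers to $\G_d(r)$ by treating the first $d-1$ coordinates as a single horizontal block that the cop equalizes and the last coordinate as the one it closes.
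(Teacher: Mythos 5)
Your Phase 1 is essentially fine (the clean way to say it is that the robber's signed horizontal displacements telescope to at most $1$, so over $k$ rounds the gap $|c_1-b_1|$ drops by at least $kr-1$), but Phase 2 contains the entire difficulty of the theorem and is not actually proved. Two concrete problems. First, the claim that the only way for the robber to freeze the vertical gap is to pour its budget into horizontal motion is false as stated: the robber also freezes the gap by fleeing vertically at the full residual speed $\sqrt{r^2-\delta^2}$, and ruling this out requires a second confinement argument in the vertical coordinate, interleaved with the horizontal one. Second, and more seriously, the ``banking'' computation is carried out only against the two pure policies (wall-to-wall horizontal sweeps and pure vertical flight). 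Against an adaptive robber the cop must trade off three quantities at once---the vertical gap, the accumulated horizontal lag $L=b_1-c_1$, and the robber's remaining room to the walls---and the arithmetic is genuinely adverse: if the robber plays $(\delta,\eta)$ with $\eta$ the full residual vertical speed, the cop's best possible vertical gain in that round is $r-\sqrt{r^2-\delta^2}\approx\delta^2/(2r)$, obtained only by conceding about $|\delta|$ of additional lag. You need a single potential function combining these quantities that provably gains $\Omega(r^2)$ per round whatever $(\delta,\eta)$ the robber plays, and you explicitly defer exactly this step; without it there is no proof. The higher-dimensional reduction is also circular as written, since ``equalizing the first $d-1$ coordinates as a block'' is itself a $(d-1)$-dimensional pursuit and the induction is not set up.

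For comparison, the paper avoids the coordinate decomposition entirely: the cop stays on the segment $OR$ from the center to the robber, and the potential is the single quantity $|OC|^2$, which a similar-triangles argument shows increases by at least $r^2/5$ in every non-capturing round; since $|OC|^2\le 1/2$ this gives capture in $O(1/r^2)$ rounds, and because $O$, $C$, $R$ and the robber's new position $R'$ are always coplanar the identical argument works in every dimension. If you want to complete your route, the missing lemma is precisely the choice of a Phase 2 potential; you may find it easier to adopt the radial potential instead.
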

\begin{proof}
We show that $c(\G_2(r))=1$ for any $r>0$, by describing a winning
strategy for the cop. In the first step, the cop places himself at
the center $O$ of $[0,1]^2$. After each move of the robber, when he
is located at a point $R$, the cop catches him if he can (that is,
if the distance between him and the robber is at most $r$); otherwise,
he moves to a point $C$ that lies on the segment  $OR$, making sure his
distance from the robber is at least, say, $r^2/100$. Moreover, we will
show that the cop can do it and also ensure that in each step the square
of the distance  between the location of the cop and  $O$ increases by
at least $r^2/5$. As this square distance cannot be more than $1/2$,
this implies that the cop catches the robber in at most $O(1/r^2)$ steps.

\begin{figure}[htbp]
\begin{center}
\includegraphics[width=1in]{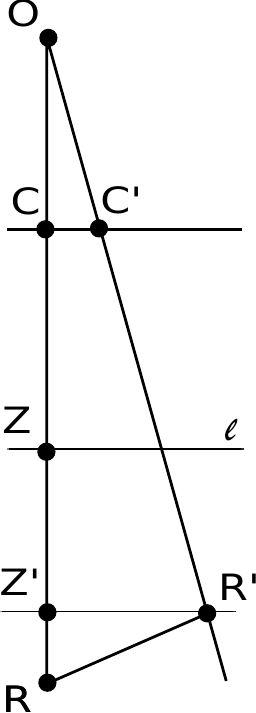}
\end{center}
\caption{Catching the bad guy on $\G_2(r)$.}\label{fig1}
\end{figure}

Here is the proof showing that the cop can indeed achieve the above in each step of the game. Suppose that the cop is located at $C$ and the robber at $R$, where $C$ lies on $OR$ (and the distance between $C$ and $R$ is at least $r^2/100$). Trivially, the cop can ensure this will be the case after his first move (unless the robber gives up prematurely without fighting and starts the game too close to the cop). A step, now, consists of a move of the robber from $R$ to $R'$ followed by a move of the cop from $C$ to $C'$. Let $Z$ denote the midpoint of $CR$ and let $\ell$ be the line through $Z$ perpendicular to $OZ$---see Figure~\ref{fig1}. Without loss of generality, choose a coordinate system so that $OR$ is a vertical line (and hence $\ell$ is a horizontal one), and assume $R$ (as well as $C$ and $Z$) are below $O$. Note that $R'$, the new location of the robber, may be assumed to be below the line $\ell$, since otherwise the distance between $C$ and $R'$ is at most the distance between $R$ and $R'$, meaning that the cop can catch the robber, winning the game. Suppose then that $R'$ is below $\ell$ and let $C'$ be the intersection point of the horizontal line through $C$ with the line $OR'$. Let also $Z'$ denote the intersection point of the horizontal line through $R'$ with the line containing $OR$---see Figure~\ref{fig1} one more time.  Now it is easy to see that $RR' \geq  R'Z' > CC'$, as the triangle $RR'Z'$ is a right-angle triangle and the two triangles $OCC'$ and $OZ'R'$ are similar. Hence, the cop may move to $C'$ if he decides to do so, as $CC'<RR' \leq r$. Consider the following two possible cases.

\smallskip
\noindent
{\bf Case 1:}\, $CC' >r/2$. In this case if the cop moves to $C'$ then its square distance to $O$ increases by $CC'^2 > r^2/4$. If $C'$ is too close to $R'$, we shift him towards $O$ slightly (that is, by less than $r^2/100$) to ensure the distance between $C'$ and $R'$ is at least $r^2/100$. Note that such a shift decreases the square distance from $O$ by less than $2r^2/100=r^2/50$, hence the square distance still increases by  at least $r^2/4-r^2/50 >r^2/5$. Thus, in this case the cop can make a step as required.

\smallskip
\noindent
{\bf Case 2:}\,  $|CC'| \leq r/2$. In this case the cop can move to $C'$ and then walk along the line $OR'$ at least distance $r/2$ towards $R'$ (without passing it, since otherwise the game ends and the cop wins). As clearly $OC' \geq OC$, in this case the cop increases its distance from $O$ by more than $r/2$ and hence its square distance by more than $r^2/4$. As before, it may be the case that he gets too close to $R'$ and then he backups slightly by less than  $r^2/100$, which is still fine.

\smallskip
This shows that in $\G_2(r)$ the cop can indeed increase its square distance  from $O$ by at least $r^2/5$ in each step (which is not a winning step ending the game), staying on the segment connecting the center and the robber (and being closer to the center than the robber). This implies that the game ends with a cop win in $O(1/r^2)$ steps. 
\end{proof}

Modifying the above argument to get a winning strategy for $\G_2(n,r)$
(a.a.s.) is not too difficult. The cop will follow essentially the same
strategy, but will always place himself at a vertex of the graph which
is sufficiently close to where he wants to be in the continuous game. 

\smallskip
More precisely, for each point $X$ of the unit square whose distance
from the center $O$ is at least $r/2$ (just to ensure that the
triangle $T(x)$ defined below will indeed be well
defined; in our argument this will always be the case) and whose distance
from the boundary is at least $r^2/10^3$ (again, in our argument
this will
always be the case), we define an isosceles triangle $T(X)$ 
as follows. One vertex is $X$, and the segment of length $r^2/100$ on
the line $OX$ starting at $X$ (and going towards $O$) is the height of
$T(X)$. The base is orthogonal to it and of length $r^3/10^5$. Despite
the fact that there are infinitely many triangles, it is not difficult
to show that if the area of such a triangle is large enough, then a.a.s.\
$\G_2(n,r)$ contains a vertex inside each such triangle.  

\begin{lemma}\label{lem:triangles}
There exists an absolute constant $c > 0$ so that a.a.s.\ every
triangle $T(X)$ contains a vertex of $\G_2(n,r)$, provided $r^5 >
c \frac {\log n}{n}$. 
\end{lemma}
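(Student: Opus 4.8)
The plan is to combine a standard first-moment (union bound) estimate with a discretisation that tames the fact that there are uncountably many triangles $T(X)$.

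First I would record the geometry. Since $T(X)$ is isosceles with height $r^2/100$ and base $r^3/10^5$, its area is $\tfrac12\cdot\tfrac{r^2}{100}\cdot\tfrac{r^3}{10^5}=\tfrac{r^5}{2\cdot 10^7}$. For a single fixed triangle $T\subseteq[0,1]^2$ of area $A$, the probability that none of the $n$ independent uniform points lands in $T$ is $(1-A)^n\le e^{-An}$. With $A=\tfrac{r^5}{2\cdot10^7}$ and the hypothesis $r^5>c\frac{\log n}{n}$ this is at most $\exp(-r^5 n/(2\cdot10^7))\le n^{-c/(2\cdot 10^7)}$, a small negative power of $n$ when $c$ is large. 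Were there only polynomially many triangles to control, a union bound would finish the proof.

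The difficulty---and the main obstacle---is that $X$ ranges over a continuum, so I would discretise using the continuity of the map $X\mapsto T(X)$. Put a square grid of side $\delta$ on $[0,1]^2$, with $\delta$ a sufficiently small constant multiple of $r^3$ (say $\delta=r^3/10^9$). If $X,X'$ lie in a common cell then $|X-X'|\le\sqrt2\,\delta$; writing a point of $T(X)$ as $X+s\,\hat u(X)+t\,\hat v(X)$ with $\hat u(X)=(O-X)/|OX|$, $0\le s\le r^2/100$ and $|t|\le r^3/(2\cdot10^5)$, and using $|OX|\ge r/2$ to bound the variation of the orthonormal pair $\hat u,\hat v$ by $O(\delta/r)$, one checks that the displacement of every such point is at most $\sqrt2\,\delta+(r^2/100)\,O(\delta/r)+(r^3/10^5)\,O(\delta/r)=O(\delta)$. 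Hence $T(X)$ and $T(X')$, being isometric images of one fixed parameter triangle under rigid motions that differ pointwise by $O(\delta)$ on the relevant region, lie within Hausdorff distance $C_0\delta$ of one another. For each grid point $X_i$ at which $T(X_i)$ is defined, let $T'(X_i)$ be $T(X_i)$ eroded inward by the margin $C_0\delta$; since triangles are convex, erosion by the Hausdorff distance gives $T'(X_i)\subseteq T(X)$ for every $X$ in the cell of $X_i$. (The defining inequalities for admissibility have slack far exceeding $\delta$, so every admissible $X$ lies in a cell whose grid point is itself admissible.)

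Two quantitative checks then close the argument. First, the inradius of $T(X_i)$ is of the order of its base, i.e.\ $\Theta(r^3)$, so eroding by $C_0\delta\ll r^3$ shrinks the area only by a constant factor; thus $\mathrm{area}\,T'(X_i)\ge\tfrac12\,\mathrm{area}\,T(X_i)=\tfrac{r^5}{4\cdot10^7}$. Second, the number of grid points is $M=O(\delta^{-2})=O(r^{-6})$, and $r^5>c\log n/n$ gives $r^{-6}=O((n/\log n)^{6/5})=O(n^{6/5})$. A union bound over the grid yields
\[
\Prob(\text{some }T'(X_i)\text{ contains no vertex})\le M\,e^{-r^5 n/(4\cdot10^7)}=O\!\left(n^{6/5}\,n^{-c/(4\cdot10^7)}\right),
\]
which tends to $0$ once $c>\tfrac65\cdot4\cdot10^7$. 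On the complementary a.a.s.\ event every $T'(X_i)$ contains a vertex; since every admissible $X$ lies in a cell with $T'(X_i)\subseteq T(X)$, every triangle $T(X)$ contains a vertex of $\G_2(n,r)$, as claimed.
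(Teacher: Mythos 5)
Your proof is correct and follows essentially the same route as the paper's: both replace the uncountable family $\{T(X)\}$ by a fixed, grid-indexed family of $O(r^{-6})$ regions of area $\Omega(r^5)$, each contained in every triangle it must serve, and then apply a first-moment union bound over this polynomial-size collection. The only difference is cosmetic---the paper inscribes small axis-aligned-to-$YO$ rectangles, while you erode the triangles themselves by the Hausdorff margin.
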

\begin{proof}
Let us start with a fixed collection $F$ of $O((1/r)^6)$ rectangles,
each of area $\Omega(r^5)$, so that every triangle $T(X)$ fully contains
at least one of these rectangles. To do so, for each point $Y$ in an
$10^6 r^3$ by $10^6 r^3$ grid in the unit square take the rectangle
of width $r^3/10^6$ and height $r^2/10^6$ in which $Y$ is the midpoint of
the edge of length $10^6 r^3$ and the other edge is in direction $YO$. It
is clear that every $T(X)$ under consideration ($X$ not too close to $O$
nor to the boundary) fully contains at least one such a rectangle.

In order to complete the proof it is enough to show that a.a.s.\ 
each rectangle in $F$ contains at least one vertex of $\G_2(n,r)$. 
The area of each such rectangle is $r^5/10^{12}$, and hence the
probability it contains no vertex is 
$$
\left( 1-\frac{r^5}{10^{12}} \right)^n \leq e^{-c \log n/ 10^{12}}.
$$
Since there are $O((1/r)^6) = O(n^2)$ rectangles, the desired 
result follows by the union bound for, say, $c = 10^{13}$, as
needed. 
\end{proof}

Now, let us come back to the main result of this section, since we have all necessary ingredients. 

\begin{proof}[Proof of Theorem~\ref{t11}]
Since we aim for a statement that holds a.a.s., it follows from
Lemma~\ref{lem:triangles} that we may assume that every triangle $T(X)$
contains at least one vertex. As we already mentioned, the cop plays
the continuous strategy, but whenever he wants to place himself at a
point $X$, he chooses an arbitrary vertex $x \in V$ of $T(X)$ to go
to. The line $R'x$ is now not necessarily identical  to the line $R'O$,
but the angle between them is sufficiently small to ensure that in the
computations above for the continuous case we do not lose much. That
was the reason we ensured that $R'$ and $X$ are never too close in
the  continuous algorithm, and as the triangle $T(X)$ is thin, the
angle between these two lines is smaller than $r/10^3$. 
This completes the proof. 
\end{proof}

As we already mentioned, essentially the same proof works for general
dimension. The continuous game in dimension $d$ is nearly identical to
the one in dimension $2$. In the first step, the cop places himself at
the center $O$ of $[0,1]^d$. After each move of the robber, when he
is located at a point $R$, the cop catches him if he can, otherwise,
he moves to a point $C$ that lies on the segment  $OR$, making sure
his distance from the robber is at least, say, $r^2/100$.  As in
the planar case, the cop can do it and also ensure that in each step
the square of the distance between his location and  $O$
increases by at least $r^2/5$. Indeed, since in each round the center
$O$, the location of the cop $C$, the old location  of the robber $R$
and his new location $R'$ lie in a two dimensional plane (since $O$,
$C$ and $R$ lie on a line) the analysis is identical to the planar case.
As the square distance of the cop from the center cannot exceed $d/4$,
this implies that the cop catches the robber in at most $O(d/r^2)$ steps.

In the discrete game we let $T(X)$ be a cone with height $r^2/100$ on
the line connecting the center $O$ to $X$, and basis of radius $r^3/10^5$
centered at $X$.  The probabilistic estimate given in the proof of Lemma
\ref{lem:triangles} shows that a.a.s. every such cone $T(X)$ contains a
vertex of our graph, provided $r^{3d-1} > c_d \frac {\log n}{n}$. 
We can thus repeat the arguments in the proof of the planar case 
to show that the assertion of Theorem~\ref{t12} holds.

\bigskip

Finally, note that the  robber can keep escaping for $\Omega(1/r^2)$
steps a.a.s. For simplicity, we describe the strategy for the robber for
the plane but this also holds for any dimension, for the same reason. As
before, we start with the continuous variant of the game. 
Initially, the robber places himself at distance bigger than $r$
from  the cop ensuring he is not too far from the center $O$ of the
square.  At each
step, when the robber located at $R$ has to move, he moves distance $r$
exactly in the direction perpendicular to $RC$, where the choice of the
direction (among the two options), is such that its square distance from
$O$ increases by at most $r^2$ (that is, the angle $ORR'$ is at most
$\pi/2$). This suffices for the continuous case, as it is clear that
the distance from the cop will exceed $r$ after each such step. In the
discrete case, the robber simply chooses a nearby point, making sure
his distance from the cop is at least what it would have been in the
continuous case.

\end{document}